\theoremstyle{plain}
\newtheorem{thm}{Theorem}[section]
\newtheorem{prop}[thm]{Proposition}
\newtheorem{lem}[thm]{Lemma}
\newtheorem{cor}[thm]{Corollary}
\newtheorem{conj}[thm]{Conjecture}
\theoremstyle{definition}
\newtheorem{dfn}[thm]{Definition}
\newtheorem{rem}[thm]{Remark}
\newtheorem{dfns-rems}[thm]{Definitions and Remarks}
\newtheorem{notas-rems}[thm]{Notations and Remarks}
\newtheorem{exmps-rems}[thm]{Examples and Remarks}
\begin{document}


\title[Sdepth of weakly polymatroidal ideals and squarefree monomial ideals]{Stanley depth of weakly polymatroidal ideals and squarefree monomial ideals}


\author[S. A. Seyed Fakhari]{S. A. Seyed Fakhari}

\address{S. A. Seyed Fakhari, Department of Mathematical Sciences,
Sharif University of Technology, P.O. Box 11155-9415, Tehran, Iran.}

\email{fakhari@ipm.ir}

\urladdr{http://math.ipm.ac.ir/fakhari/}


\begin{abstract}
Let $I$ be a weakly polymatroidal ideal or a squarefree monomial ideal of a polynomial ring $S$. In this paper we provide  a lower bound for the Stanley depth of $I$ and $S/I$. In particular we prove that if $I$ is a squarefree monomial ideal which is generated in a single degree, then  ${\rm sdepth}(I)\geq n-\ell(I)+1$ and ${\rm sdepth}(S/I)\geq n-\ell(I)$, where $\ell(I)$ denotes the analytic spread of $I$. This proves a conjecture of the author in a special case.
\end{abstract}


\subjclass[2000]{Primary: 13C15, 05E99; Secondary: 13C13}


\keywords{Monomial ideal, Depth, Stanley depth, Analytic spread, Weakly polymatroidal ideal, Squarefree monomial ideal, Rank, Affine rank}


\maketitle


\section{Introduction} \label{sec1}

Let $\mathbb{K}$ be a field and let $S=\mathbb{K}[x_1,\dots,x_n]$ be the
polynomial ring in $n$ variables over $\mathbb{K}$. Let $M$ be a
finitely generated $\mathbb{Z}^n$-graded $S$-module. Let $u\in M$ be a
homogeneous element and $Z\subseteq \{x_1,\dots,x_n\}$. The $\mathbb
{K}$-subspace $u\mathbb{K}[Z]$ generated by all elements $uv$ with $v\in
\mathbb{K}[Z]$ is called a {\it Stanley space} of dimension $|Z|$, if it is
a free $\mathbb{K}[Z]$-module. Here, as usual, $|Z|$ denotes the
number of elements of $Z$. A decomposition $\mathcal{D}$ of $M$ as a finite
direct sum of Stanley spaces is called a {\it Stanley decomposition} of
$M$. The minimum dimension of a Stanley space in $\mathcal{D}$ is called the
{\it Stanley depth} of $\mathcal{D}$ and is denoted by ${\rm sdepth}
(\mathcal {D})$. The quantity $${\rm sdepth}(M):=\max\big\{{\rm sdepth}
(\mathcal{D})\mid \mathcal{D}\ {\rm is\ a\ Stanley\ decomposition\ of}\
M\big\}$$ is called the {\it Stanley depth} of $M$. Stanley \cite{s}
conjectured that $${\rm depth}(M) \leq {\rm sdepth}(M)$$ for every
$\mathbb{Z}^n$-graded $S$-module $M$. For a reader friendly introduction
to Stanley depth, we refer to \cite{psty}.

Let $I$ be a monomial ideal of $S$ with Rees algebra $\mathcal{R}(I)$
and let $\mathfrak{m}=(x_1,\ldots,x_n)$ be the graded maximal ideal of $S$.
Then the $\mathbb{K}$-algebra $\mathcal{R}(I)/\mathfrak{m}\mathcal{R}(I)$
is called the {\it fibre ring} and its Krull dimension is called the {\it
analytic spread} of $I$, denoted by $\ell(I)$. This invariant is a measure
for the growth of the number of generators of the powers of $I$. Indeed,
for $k\gg 0$, the Hilbert function $H(\mathcal{R}(I)/\mathfrak{m}
\mathcal{R}(I),\mathbb{K},k)={\rm dim}_\mathbb {K}(I^k/\mathfrak{m}I^k)$,
which counts the number of generators of the powers of $I$, is a polynomial
function of degree $\ell(I)-1$.

In this paper we consider some linear algebraic approximations of the analytic spread of a monomial ideal. Indeed, assume that $v_1, \ldots, v_t$ are $t$ vectors in $\mathbb{Q}^n$. Then they are called to be {\it linearly dependent} if there exist rational numbers $c_1, \ldots, c_t$, not all zero, for which $$c_1v_1+\ldots +c_tv_t=0.$$Similarly they are {\it affinely dependent}, if in addition the sum of coefficients is zero: $$\sum_{i=1}^tc_i=0.$$If $v_1, \ldots, v_t$ are not linearly dependent (resp. affinely dependent), then they are said to be {\it linaerly independent} (resp. {\it affinely independent}). Now we associate two invariants to every monomial ideal $I$, which are called the rank and the affine rank of $I$. For every vector $\mathbf{a}=(a_1, \ldots, a_n)$ of non-negative integers, we denote the monomial $x_1^{a_1}\ldots x_n^{a_n}$ by $\mathbf{x^{a}}$.
\begin{dfn} \label{ar}
Let $I\subseteq S=\mathbb{K}[x_1,\ldots,x_n]$ be a monomial ideal and $G(I)=\{\mathbf{x^{a_1}}, \ldots, \mathbf{x^{a_m}}\}$ be the set of minimal monomial generators of $I$. The {\it rank} of $I$, denoted by ${\rm rank}(I)$ is the cardinality of the largest linearly independent subset of $\{\mathbf{a_1}, \ldots, \mathbf{a_m}\}$. Similarly the {\it affine rank} of $I$, denoted by ${\rm arank}(I)$ is the cardinality of the largest affinely independent subset of $\{\mathbf{a_1}, \ldots, \mathbf{a_m}\}$.
\end{dfn}

It is clear from Definition \ref{ar} that for every monomial ideal $I$,  the inequality ${\rm arank}(I)\geq {\rm rank}(I)$ holds. It is known \cite[Lemma
10.3.19]{hh'} that if $I$ is a monomial ideal which is generated in a single degree, then $\ell(I)={\rm rank}(I)$. The following Proposition shows that in this case we also have $\ell(I)={\rm arank}(I)$.

\begin{prop} \label{single}
Let $I$ be a monomial ideal, which is generated in a single degree. Then  $\ell(I)={\rm rank}(I)={\rm arank}(I)$.
\end{prop}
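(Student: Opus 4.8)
The plan is to take the known equality $\ell(I)={\rm rank}(I)$ from \cite[Lemma 10.3.19]{hh'} as given and to show that, under the single-degree hypothesis, ${\rm rank}(I)$ and ${\rm arank}(I)$ in fact coincide. Since ${\rm arank}(I)\geq{\rm rank}(I)$ holds for every monomial ideal (as noted just after Definition \ref{ar}), the whole statement will follow once the reverse inequality ${\rm arank}(I)\leq{\rm rank}(I)$ is established.

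Let $d$ be the common degree of the minimal generators, and write $G(I)=\{\mathbf{x^{a_1}},\dots,\mathbf{x^{a_m}}\}$. The key observation is that each exponent vector $\mathbf{a_i}=(a_{i,1},\dots,a_{i,n})$ satisfies $a_{i,1}+\cdots+a_{i,n}=d$; that is, all the $\mathbf{a_i}$ lie on the affine hyperplane $H=\{\mathbf{v}\in\mathbb{Q}^n:\sigma(\mathbf{v})=d\}$, where $\sigma\colon\mathbb{Q}^n\to\mathbb{Q}$ is the linear form $\sigma(\mathbf{v})=\sum_{j=1}^n v_j$. Since $I$ is a proper ideal we have $d\geq 1$, so $H$ does not pass through the origin.

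I would then argue that, for vectors lying on such a hyperplane, linear dependence and affine dependence are the same notion. Indeed, suppose $\mathbf{a_{i_1}},\dots,\mathbf{a_{i_t}}$ are linearly dependent, say $c_1\mathbf{a_{i_1}}+\cdots+c_t\mathbf{a_{i_t}}=\mathbf{0}$ with not all $c_k$ equal to zero. Applying $\sigma$ to this relation gives $\sum_{k=1}^t c_k\,\sigma(\mathbf{a_{i_k}})=d\sum_{k=1}^t c_k=0$, and since $d\neq 0$ we get $\sum_{k=1}^t c_k=0$; hence the very same relation witnesses affine dependence. The converse implication (affine dependence forces linear dependence) is immediate from the definitions. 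Consequently a subset of $\{\mathbf{a_1},\dots,\mathbf{a_m}\}$ is linearly independent if and only if it is affinely independent, so the largest linearly independent subset and the largest affinely independent subset have equal cardinality, i.e. ${\rm rank}(I)={\rm arank}(I)$. Combined with $\ell(I)={\rm rank}(I)$, this completes the proof.

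There is no serious obstacle here: the only point demanding care is the remark that the single-degree hypothesis forces all exponent vectors onto an affine hyperplane \emph{avoiding} the origin (which is precisely where $d\neq 0$ is used), plus the trivial bookkeeping to exclude the degenerate case $I=S$. The substantive input — the identification of $\ell(I)$ with a linear-algebraic rank — is supplied by \cite[Lemma 10.3.19]{hh'} and is not reproved.
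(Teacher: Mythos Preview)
Your proof is correct and follows essentially the same approach as the paper: both reduce to showing ${\rm rank}(I)={\rm arank}(I)$ via the observation that all exponent vectors have coordinate sum equal to the common degree $d$, so any linear dependence relation automatically has coefficient sum zero and is therefore an affine dependence. The paper phrases this by taking a maximal affinely independent set and verifying it is linearly independent, while you state the contrapositive equivalence of dependence notions directly, but the content is identical.
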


\begin{proof}
It is sufficient to prove the second equality. Assume that ${\rm arank}(I)=t$. Therefore, there exist integers $1\leq i_1 < \ldots < i_t\leq m$ such that the equalities $$c_1\mathbf{a_{i_1}}+ \ldots +c_t\mathbf{a_{i_t}}=0$$ and $$c_1+ \ldots + c_t=0,$$with $c_i \in \mathbb{Q}$, for every $1\leq i \leq t$, imply that $c_1= \ldots = c_t=0$. Since $I$ is generated in a single degree, $\mathbf{a_{i_1}}, \ldots ,\mathbf{a_{i_t}}$ are linearly independent over $\mathbb{Q}$. Indeed, assume that there exist rational numbers $d_1, \ldots, d_t$ such that $$d_1\mathbf{a_{i_1}}+ \ldots +d_t\mathbf{a_{i_t}}=0.$$Now for every $1\leq j \leq t$, the sum of the components of $\mathbf{a_{i_j}}$ is equal to $k$ and thus, the sum of the components of $$d_1\mathbf{a_{i_1}}+ \ldots +d_t\mathbf{a_{i_t}}$$ is equal to $$d_1k+ \ldots+ d_tk$$and this shows that $$d_1+ \ldots + d_t=0.$$Therefore $$d_1= \ldots = d_t=0.$$Hence $\mathbf{a_{i_1}}, \ldots ,\mathbf{a_{i_t}}$ are linearly independent over $\mathbb{Q}$. Therefore, ${\rm rank}(I)\geq t$. Since we always have ${\rm arank}(I)\geq {\rm rank}(I)$, it follows that ${\rm arank}(I)= {\rm rank}(I)$.
\end{proof}

In \cite{psy}, the authors prove that if $I\subset S$ is a weakly polymatroidal ideal $I$ (see Definition \ref{weak}), which is generated in a single degree, then  ${\rm depth}(S/I)\geq n-\ell(I)$, ${\rm sdepth}(S/I)\geq n-\ell(I)$ and ${\rm sdepth}(I)\geq n-\ell(I)+1$. In Section \ref{sec2} we generalize this result by proving that for every weakly polymatroidal ideal $I$, the inequalities $${\rm sdepth}(I)\geq n-{\rm arank}(I)+1, \ \ \ {\rm sdepth}(S/I)\geq n-{\rm arank}(I)$$ and $${\rm depth}(S/I)\geq n-{\rm arank}(I)$$hold (see Theorem \ref{main}).\\
In \cite{s1}, the author conjectures that for every integrally closed monomial ideal, the inequalities ${\rm sdepth}(S/I)\geq n-\ell(I)$ and ${\rm sdepth}(I)\geq n-\ell(I)+1$ hold (see Conjecture \ref{conje}). In Section \ref{sec3}, we prove this conjecture for every squarefree monomial ideal which is generated in a single degree. In fact, we prove some stronger result. We show that for every squarefree monomial ideal $I$ of the polynomial ring $S$, the inequalities $${\rm sdepth}(I)\geq n-{\rm rank}(I)+1$$ and $${\rm sdepth}(S/I)\geq n-{\rm rank}(I)$$ hold (see Theorem \ref{smain}).


\section{Stanley depth of weakly polymatroidal ideals} \label{sec2}

Weakly polymatroidal ideals are generalization of polymatroidal ideals and
they are defined as follows.

\begin{dfn} [\cite{hh'}, Definition 12.7.1] \label{weak}
A monomial ideal $I$ of $S=\mathbb{K}[x_1,\dots,x_n]$ is called {\it weakly
polymatroidal} if for every two monomials $u = x_1^{a_1} \ldots x_n^{a_n}$
and $v = x_1^{b_1} \ldots x_n^{b_n}$ in $G(I)$ such that $a_1 = b_1,
\ldots, a_{t-1} = b_{t-1}$ and $a_t > b_t$ for some $t$, there exists $j >
t$ such that $x_t(v/x_j)\in I$.
\end{dfn}

The aim of this section is to provide a lower bound for the depth and the Stanley depth of weakly polymatroidal ideals. As usual for every monomial $u$, the {\it support} of $u$, denoted by ${\rm Supp}(u)$, is the set of variables, which divide $u$.

\begin{lem} \label{colon}
Let $I$ be a weakly polymatroidal ideal and let $G(I)=\{u_1, \ldots, u_m\}$ be the set of minimal monomial generators of $I$. Assume that $$x_1\in \bigcup_{i=1}^m{\rm Supp}(u_i).$$ Then $(I:x_1)$ is a weakly polymatroidal ideal which is minimally generated by the set $$\mathcal{G}=\{\frac{u_i}{x_1}| \ u_i\in G(I) \ {\rm and} \ x_1 \ {\rm divides} \ u_i \}.$$
\end{lem}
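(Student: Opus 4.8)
The plan is to prove the two assertions separately: first that $\mathcal{G}$ is a minimal generating set for $(I:x_1)$, and then that the resulting ideal is weakly polymatroidal.

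For the generating set, first I would recall that $(I:x_1)$ is generated by the monomials $u_i/\gcd(u_i,x_1)$ for $u_i\in G(I)$; since $x_1$ is squarefree in itself, $\gcd(u_i,x_1)$ is $x_1$ when $x_1\mid u_i$ and is $1$ otherwise. So $(I:x_1)$ is generated by $\{u_i/x_1 : x_1\mid u_i\}\cup\{u_i : x_1\nmid u_i\}$. The first subtlety is that this is not obviously a \emph{minimal} generating set, nor is it obvious that the second family is superfluous. Here I would use the hypothesis that $x_1\in\bigcup_i{\rm Supp}(u_i)$: pick some $u_k\in G(I)$ with $x_1\mid u_k$. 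Given any $u_i$ with $x_1\nmid u_i$, I claim $u_k/x_1$ divides $u_i$, which makes $u_i$ redundant. To see this, compare the exponent vectors of $u_k$ and $u_i$: they agree on coordinate $1$? No — they differ there ($a_1\ge 1>0=b_1$ for $u_k,u_i$ respectively is the wrong direction). The right move is to note $u_i$ and $u_k$ are incomparable (both in $G(I)$), and apply the weakly polymatroidal property at the first coordinate where they differ; since $b_1=0<a_1$ would put the larger exponent on $u_k$, whereas we want to feed in the pair with the larger first exponent. Running the definition with $u=u_k$, $v=u_i$ at $t=1$ gives $j>1$ with $x_1(u_i/x_j)\in I$; then $x_1(u_i/x_j)$ is divisible by some $u_\ell\in G(I)$, and since $x_1\mid x_1(u_i/x_j)$ one checks $x_1\mid u_\ell$, so $u_\ell/x_1$ divides $u_i/x_j$, hence $u_\ell/x_1$ divides $u_i$. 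This shows every $u_i$ with $x_1\nmid u_i$ is a multiple of some element of $\mathcal G$, so $\mathcal G$ generates $(I:x_1)$. Minimality of $\mathcal G$ follows because the $u_i$ with $x_1\mid u_i$ were already a minimal generating set among themselves (dividing all of them by $x_1$ preserves the divisibility relations among that subfamily), and no element of $\mathcal{G}$ is divisible by another.

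For the weakly polymatroidal property of $(I:x_1)$: take two elements $u_i/x_1$ and $u_j/x_1$ of $\mathcal G$ with exponent vectors agreeing in coordinates $1,\dots,t-1$ and with the $t$-th exponent of $u_i/x_1$ strictly larger. Dividing by $x_1$ only changes coordinate $1$ (by $1$), so the exponent vectors of $u_i$ and $u_j$ also agree in coordinates $1,\dots,t-1$ with $u_i$'s $t$-th exponent strictly larger (for $t=1$ this is automatic, and for $t\ge 2$ coordinate $1$ is untouched — but if $t=1$ we must still check $u_i,u_j$ differ in coordinate $1$, which they do by assumption on $u_i/x_1,u_j/x_1$). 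By weak polymatroidality of $I$ there is $s>t$ with $x_t(u_j/x_s)\in I$. Then $x_t(u_j/x_s)$ is a multiple of some $u_\ell\in G(I)$; I claim $x_1\mid u_\ell$. Indeed $x_1\mid u_j$ (as $u_j/x_1\in\mathcal G$), and since $s>t\ge 1$ and $t\ge 1$, dividing $u_j$ by $x_s$ and multiplying by $x_t$ does not decrease the $x_1$-exponent unless $t=1$... here one must be slightly careful: if $t=1$ then $x_t=x_1$ and $x_1\mid x_1(u_j/x_s)$ trivially with exponent $\ge 2$; if $t\ge 2$ the $x_1$-exponent of $x_t(u_j/x_s)$ equals that of $u_j$, which is $\ge 1$. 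Either way $x_1$ divides $x_t(u_j/x_s)$, hence divides its divisor $u_\ell$, so $u_\ell/x_1\in\mathcal G$. Finally $(u_\ell/x_1)\mid x_t(u_j/x_s)/x_1 = x_t\big((u_j/x_1)/x_s\big)$ (valid since $s\ne 1$), which exhibits $x_t\big((u_j/x_1)/x_s\big)\in(I:x_1)$ with $s>t$, as required.

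The main obstacle is the bookkeeping around coordinate $1$: because passing to $(I:x_1)$ perturbs exactly the first exponent, one has to handle the case $t=1$ in the weakly polymatroidal check — and the parallel case in the generation argument — separately and with care, making sure the "first differing coordinate'' and the divisibility/exponent comparisons survive the division by $x_1$. Once that case analysis is set up cleanly, the rest is a direct unwinding of Definition \ref{weak} together with the standard formula for the colon ideal.
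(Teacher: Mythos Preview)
Your overall approach matches the paper's: apply the weakly polymatroidal property at $t=1$ to show that each generator $u_i$ with $x_1\nmid u_i$ is redundant, and then verify the weakly polymatroidal condition on $\mathcal G$ directly. The paper in fact simply asserts the latter as ``clear'' and does not discuss minimality, so in structure you are doing more.

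However, there is a genuine logical slip that occurs twice. From $u_\ell\mid w$ and $x_1\mid w$ you conclude $x_1\mid u_\ell$. This is false in general (e.g.\ $x_2\mid x_1x_2$ but $x_1\nmid x_2$), so ``since $x_1\mid x_1(u_i/x_j)$ one checks $x_1\mid u_\ell$'' and ``$x_1$ divides $x_t(u_j/x_s)$, hence divides its divisor $u_\ell$'' are both unjustified.

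In the generating-set part this step is essential, and the correct argument is exactly the one in the paper: if $x_1\nmid u_\ell$, then from $u_\ell\mid x_1(u_i/x_j)$ one gets $u_\ell\mid u_i/x_j$, so $u_\ell$ properly divides $u_i$, contradicting $u_i\in G(I)$. Hence $x_1\mid u_\ell$ after all.

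In the weakly polymatroidal part the detour through $u_\ell$ is unnecessary. Since $s>t\geq 1$ forces $s\neq 1$, one has
\[
x_1\cdot x_t\!\left(\frac{u_j/x_1}{x_s}\right)=x_t\!\left(\frac{u_j}{x_s}\right)\in I,
\]
so $x_t\big((u_j/x_1)/x_s\big)\in (I:x_1)$ directly, which is all Definition~\ref{weak} requires. With these two fixes your plan is complete and essentially coincides with the paper's proof.
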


\begin{proof}
It is clear that the ideal generated by $\mathcal{G}$ is a weakly polymatroidal
ideal. Thus, we prove that $(I:x_1)$ is generated by the set $\mathcal{G}$. Without loss of generality, we may
assume that $u_1, \ldots, u_t$ are divisible by $x_1$ and $u_{t+1},\ldots,
u_m$ are not divisible by $x_1$, where $1\leq t\leq m$. Let $v_i=u_i/x_1$
($1\leq i\leq t$). We should prove that $(I:x_1)$ is generated by $v_1,\ldots,
v_t$. Let $v\in(I:x_1)$ be a monomial. Then
$x_1v\in I$ and so there exists $1\leq i\leq m$ in such a way that $u_i$
divides $x_1v$. If $1\leq i \leq t$, then $v$ is divisible by $v_i$ and
therefore, $v\in (v_1,\ldots,v_t)$. Hence, we may assume that $i\geq
t+1$. Now $u_i$ is not divisible by $x_1$ and thus $u_i|v$. Since $$x_1\in \bigcup_{i=1}^m{\rm Supp}(u_i),$$ Definition
\ref{weak} implies that there exists $j\geq 2$ such that $x_1u_i/x_j \in I$. Hence, there exists
$1\leq s \leq m$, such that $u_s$ divides $x_1u_i/x_j$. If $t+1\leq s \leq m$, then $u_s$ divides $u_i/x_j$ and thus $u_s$ properly divides $u_i$, which is a contradiction, because $G(I)$ is the set of minimal monomial generators of $I$. It follows that $1 \leq s \leq t$. Therefore, $v_s$ divides $u_i/x_j$ and hence, it divides $u_i$. Since $v$ is divisible by $u_i$, we conclude that $v_s$ divides $v$. This shows that $v\in
(v_1, \ldots v_t)$ and completes the proof of the
lemma.
\end{proof}

The following lemma shows that the affine rank of a weakly polymatroidal ideal does not increase under the colon operation with respect to the variable $x_1$

\begin{lem} \label{wcolon}
Let $I$ be a weakly polymatroidal ideal. Then ${\rm arank}((I:x_1))\leq {\rm arank}(I)$.
\end{lem}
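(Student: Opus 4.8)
The plan is to compare minimal generators of $I$ and of $(I:x_1)$ via the explicit description given by Lemma~\ref{colon}. Write $G(I) = \{u_1, \ldots, u_m\}$ and, as in the proof of that lemma, assume $u_1, \ldots, u_t$ are exactly those generators divisible by $x_1$, so that $G((I:x_1)) = \{v_1, \ldots, v_t\}$ with $v_i = u_i/x_1$. If $\mathbf{a_i}$ denotes the exponent vector of $u_i$ and $\mathbf{b_i}$ that of $v_i$, then $\mathbf{b_i} = \mathbf{a_i} - \mathbf{e_1}$ for $1 \le i \le t$, where $\mathbf{e_1}$ is the first standard basis vector. The key observation is that subtracting a \emph{fixed} vector $\mathbf{e_1}$ from each of $\mathbf{a_1}, \ldots, \mathbf{a_t}$ is an affine translation, and affine dependence (a dependence whose coefficients sum to zero) is invariant under a common translation: if $\sum_{j} c_j \mathbf{b_{i_j}} = 0$ with $\sum_j c_j = 0$, then $\sum_j c_j \mathbf{a_{i_j}} = \sum_j c_j(\mathbf{b_{i_j}} + \mathbf{e_1}) = \sum_j c_j \mathbf{b_{i_j}} + (\sum_j c_j)\mathbf{e_1} = 0$, and conversely. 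Hence a subset of $\{\mathbf{b_1}, \ldots, \mathbf{b_t}\}$ is affinely independent if and only if the corresponding subset of $\{\mathbf{a_1}, \ldots, \mathbf{a_t}\}$ is affinely independent.

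From this, the largest affinely independent subset of $\{\mathbf{b_1}, \ldots, \mathbf{b_t}\}$ has the same cardinality as the largest affinely independent subset of $\{\mathbf{a_1}, \ldots, \mathbf{a_t}\}$, which is at most the largest affinely independent subset of the full set $\{\mathbf{a_1}, \ldots, \mathbf{a_m}\}$. Therefore ${\rm arank}((I:x_1)) \le {\rm arank}(I)$. One should note that Lemma~\ref{colon} requires the hypothesis $x_1 \in \bigcup_{i=1}^m {\rm Supp}(u_i)$ in order to identify $G((I:x_1))$ with $\{v_1, \ldots, v_t\}$; if instead $x_1$ divides none of the generators, then $(I:x_1) = I$ and the inequality is trivial, so the statement holds in all cases.

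I do not anticipate a serious obstacle here — the argument is essentially the translation-invariance of affine independence combined with the generator description from Lemma~\ref{colon}. The only point requiring a little care is the bookkeeping: making sure that passing to $(I:x_1)$ genuinely only \emph{removes} generators (those not divisible by $x_1$) and shifts the rest by $\mathbf{e_1}$, rather than creating new minimal generators or collisions; but this is exactly what Lemma~\ref{colon} guarantees, since $\mathcal{G} = \{u_i/x_1 : x_1 \mid u_i\}$ is stated there to be the \emph{minimal} generating set of $(I:x_1)$. A secondary subtlety is that $\mathbf{b_i}$ is still a vector of nonnegative integers (so it is a legitimate exponent vector), which holds because $x_1 \mid u_i$ for each $i \le t$.
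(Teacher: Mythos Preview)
Your proposal is correct and follows essentially the same approach as the paper: both arguments reduce to Lemma~\ref{colon} to identify $G((I:x_1))$ with the $\mathbf{e_1}$-translates of the $x_1$-divisible generators of $I$, and then use the translation-invariance of affine (in)dependence to conclude. The only differences are cosmetic---the paper picks a specific maximal affinely independent subset and verifies directly that its translate remains affinely independent, while you phrase translation-invariance as a general principle---and your handling of the degenerate case (no generator divisible by $x_1$, whence $(I:x_1)=I$) matches the paper's.
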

\begin{proof}
If $I=(I:x_1)$, then there is nothing to prove. So assume that $I\neq(I:x_1)$.
Let $G(I)=\{u_1, \ldots, u_m\}$ be the set of minimal monomial generators of $I$. Since $I\neq(I:x_1)$, it follows that $$x_1\in \bigcup_{i=1}^m{\rm Supp}(u_i).$$ Without loss of generality, we may
assume that $u_1, \ldots, u_t$ are divisible by $x_1$ and $u_{t+1},\ldots,
u_m$ are not divisible by $x_1$, where $1\leq t\leq m$. Let $v_i=u_i/x_1$
($1\leq i\leq t$). By Lemma \ref{colon}, the set $\{v_1, \ldots, v_t\}$ is the set of minimal monomial generators of $(I:x_1)$. For simplicity we assume that $\mathbf{a_i}$ is the exponent vector of $v_i$, ($1\leq i \leq t$). Suppose that ${\rm arank}((I:x_1))=s$ and choose the monomials $v_{j_1}, \ldots, v_{j_s}$, such that the equalities $$c_1\mathbf{a_{j_1}}+ \ldots +c_s\mathbf{a_{j_s}}=0$$ and $$c_1+ \ldots + c_s=0,$$with $c_i \in \mathbb{Q}$, for every $1\leq i \leq s$, imply that $c_1= \ldots = c_s=0$. Note that for every $1\leq i \leq t$, the exponent vector of $u_i$ is equal to $\mathbf{a_i}+ \mathbf{e_1}$, where $\mathbf{e_1}$ is the first vector in the standard basis of $\mathbb{Q}^n$. Now assume that there exist $d_1, \ldots, d_s\in \mathbb{Q}$, such that $d_1+\ldots+d_s=0$ and $$d_1(\mathbf{a_{j_1}}+ \mathbf{e_1})+ \ldots +d_s(\mathbf{a_{j_s}}+\mathbf{e_1})=0.$$
Therefore $$d_1\mathbf{a_{j_1}}+ \ldots +d_s\mathbf{a_{j_s}}+(d_1+ \ldots + d_s)\mathbf{e_1}=0.$$Since $d_1+\ldots+d_s=0$, it follows that $$d_1\mathbf{a_{j_1}}+ \ldots +d_s\mathbf{a_{j_s}}.$$By the choice of $v_{j_1}, \ldots, v_{j_s}$,  we conclude that $d_1= \ldots =d_s=0$. Thus,
${\rm arank}(I)\geq s$ and this proves our assertion.
\end{proof}

In the following lemma we consider the behavior of the affine rank of an arbitrary monomial ideal under the elimination of $x_1$.

\begin{lem} \label{del}
Let $I$ be a monomial ideal of $S=\mathbb{K}[x_1,\ldots,x_n]$, such that $$x_1\in \bigcup_{u\in G(I)}{\rm Supp}(u).$$Let $S'=\mathbb{K}[x_2, \ldots, x_n]$ be the polynomial ring obtained from $S$ by deleting the variable $x_1$ and consider the ideal $I'=I\cap S'$. Then ${\rm arank}(I')+1\leq {\rm arank}(I)$.
\end{lem}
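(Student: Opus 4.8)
The plan is to describe $I'=I\cap S'$ explicitly in terms of $G(I)$, and then to enlarge a maximal affinely independent family of exponent vectors coming from $G(I')$ by adjoining the exponent vector of a single generator of $I$ that involves $x_1$.

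\textbf{Step 1 (identify $G(I')$).} I would first argue that a monomial $w\in S'$ lies in $I$ exactly when it is divisible by some $u\in G(I)$ with $x_1\notin{\rm Supp}(u)$: any generator dividing $w$ cannot involve $x_1$, since $w$ does not. Hence $I'$ is generated by $\{u\in G(I):x_1\notin{\rm Supp}(u)\}$, and since this is a subset of the antichain $G(I)$, it is precisely $G(I')$. Writing $G(I)=\{u_1,\dots,u_m\}$ with $u_1,\dots,u_t$ divisible by $x_1$ and $u_{t+1},\dots,u_m$ not (so $t\ge 1$ by hypothesis, and $G(I')=\{u_{t+1},\dots,u_m\}$, possibly empty), and letting $\mathbf{b_i}$ be the exponent vector of $u_i$, the crucial structural fact is that the first coordinate of $\mathbf{b_i}$ is positive for $i\le t$ and zero for $i>t$.

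\textbf{Step 2 (adjoin one vector).} Set $s={\rm arank}(I')$ and pick an affinely independent subfamily $\mathbf{b_{j_1}},\dots,\mathbf{b_{j_s}}$ of $\{\mathbf{b_{t+1}},\dots,\mathbf{b_m}\}$ of maximal size $s$ (empty when $I'=(0)$). I claim $\mathbf{b_1},\mathbf{b_{j_1}},\dots,\mathbf{b_{j_s}}$ is affinely independent, which gives ${\rm arank}(I)\ge s+1$. To check the claim: if $c_0\mathbf{b_1}+c_1\mathbf{b_{j_1}}+\dots+c_s\mathbf{b_{j_s}}=0$ with $c_0+c_1+\dots+c_s=0$, then reading off the first coordinate (zero for $\mathbf{b_{j_1}},\dots,\mathbf{b_{j_s}}$ and positive for $\mathbf{b_1}$) forces $c_0=0$; what remains is an affine dependence among the chosen subfamily, so $c_1=\dots=c_s=0$. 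The vectors are also pairwise distinct ($\mathbf{b_1}$ has positive first coordinate, the others do not), so this is a genuine affinely independent subset of size $s+1$ of the exponent vectors of $G(I)$.

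\textbf{Where the work is.} Conceptually the argument is short; the one place to be careful is Step 1, namely the clean identification of the minimal generators of $I\cap S'$, together with the degenerate case $I\cap S'=(0)$ (i.e. $s=0$), where the claimed inequality just says ${\rm arank}(I)\ge 1$ — true for any nonzero monomial ideal, since a single exponent vector is affinely independent. After Step 1, the rest is the one-line first-coordinate projection in Step 2.
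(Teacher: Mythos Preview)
Your proof is correct and follows essentially the same approach as the paper's: both split $G(I)$ according to whether $x_1$ appears, take a maximal affinely independent family of exponent vectors from the generators not involving $x_1$, adjoin the exponent vector of one generator that does involve $x_1$, and use the first coordinate to force the new coefficient to vanish. You are a bit more careful than the paper in justifying $G(I')=\{u\in G(I):x_1\notin{\rm Supp}(u)\}$ and in treating the degenerate case $I'=(0)$, but the argument is otherwise the same.
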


\begin{proof}
Let $G(I)=\{u_1, \ldots, u_m\}$ be the set of minimal monomial generators of $I$. For simplicity we assume that $\mathbf{a_i}$ is the exponent vector of $u_i$,  ($1\leq i \leq m$). Without loss of generality, we may
assume that $u_1, \ldots, u_t$ are divisible by $x_1$ and $u_{t+1},\ldots,
u_m$ are not divisible by $x_1$, where $1\leq t\leq m$. Then the set $\{u_{t+1},\ldots,
u_m\}$ is the set of minimal monomial generators of $I'$. Assume that ${\rm arank}(I')=s$. Thus, there exist integers $t+1\leq j_1 < j_2 < \ldots < j_s\leq m$, such that the equalities $$c_1\mathbf{a_{j_1}}+ \ldots +c_s\mathbf{a_{j_s}}=0$$ and $$c_1+ \ldots + c_s=0,$$with $c_i \in \mathbb{Q}$, for every $1\leq i \leq s$, imply that $c_1= \ldots = c_s=0$. Now we consider the set $\{u_1, u_{j_1}, \ldots, u_{j_s}\}$ and assume that there exist $d_0, d_1, \ldots, d_s\in \mathbb{Q}$,
such that $d_0+d_1+\ldots+d_s=0$ and $$d_0\mathbf{a_1}+d_1\mathbf{a_{j_1}}+ \ldots +d_s\mathbf{a_{j_s}}=0.$$Looking at the first component of the vector
$d_0\mathbf{a_1}+d_1\mathbf{a_{j_1}}+ \ldots +d_s\mathbf{a_{j_s}}$, it follows that $d_0=0$ and hence, $d_1+\ldots+d_s=0$ and $$d_1\mathbf{a_{j_1}}+ \ldots +d_s\mathbf{a_{j_s}}=0.$$By the choice of integers $j_1, \ldots, j_s$, we conclude that $d_1= \ldots = d_s=0$. Therefore ${\rm arank}(I)\geq s+1={\rm arank}(I')+1$.
\end{proof}

\begin{rem}
It is completely clear from the proof of the Lemma \ref{del}, that one can consider any arbitrary variable instead of $x_1$.
\end{rem}

We are now ready to state and prove the main result of this section.

\begin{thm} \label{main}
Let $I$ be a weakly polymatroidal ideal of $S=\mathbb{K}[x_1,\ldots,x_n]$. Then we have the following
assertions:

\begin{itemize}
\item[(i)] ${\rm sdepth}(I)\geq n-{\rm arank}(I)+1$ and ${\rm sdepth}(S/I)\geq
    n-{\rm arank}(I)$.\\[-0.3cm]
\item[(ii)] ${\rm depth}(S/I)\geq n-{\rm arank}(I)$.
\end{itemize}
\end{thm}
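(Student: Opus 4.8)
The plan is to prove all three assertions simultaneously by induction on $n$, the number of variables, using the colon-and-deletion short exact sequence together with the two structural lemmas already established. The base case $n=1$ is trivial: a weakly polymatroidal ideal in one variable is either $0$, the whole ring, or a principal ideal $(x_1^a)$, and in each of these cases the asserted bounds are checked directly (note ${\rm arank}(I)\le 1$ here). For the inductive step, I may assume without loss of generality that $x_1$ appears in the support of some minimal generator of $I$; indeed, if $I\subseteq \mathbb{K}[x_2,\dots,x_n]$ is extended from a weakly polymatroidal ideal $\bar I$ of $S'=\mathbb{K}[x_2,\dots,x_n]$, then ${\rm arank}(I)={\rm arank}(\bar I)$, and the known behaviour of ${\rm sdepth}$ and ${\rm depth}$ under adjoining a variable reduces the claim to the inductive hypothesis applied to $\bar I$ in $n-1$ variables.

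Assuming then that $x_1$ divides some generator, I consider the exact sequence of $\mathbb{Z}^n$-graded $S$-modules
\[
0 \longrightarrow S/(I:x_1) \xrightarrow{\ \cdot x_1\ } S/I \longrightarrow S/(I,x_1) \longrightarrow 0.
\]
For the quotient $S/I$, I use the standard fact that ${\rm sdepth}(S/I)\ge \min\{{\rm sdepth}(S/(I:x_1)),\ {\rm sdepth}(S/(I,x_1))\}$ and the analogous inequality for ${\rm depth}$ coming from the long exact sequence in local cohomology (the depth lemma). By Lemma~\ref{colon}, $(I:x_1)$ is again weakly polymatroidal, so the inductive hypothesis (applied in the same ring $S$, but after the reduction above one may also invoke it via $n$; more precisely one inducts on the number of variables occurring in $G(I)$, so passing to $(I:x_1)$ strictly decreases nothing but the bound is supplied by Lemma~\ref{wcolon}) gives ${\rm sdepth}(S/(I:x_1))\ge n-{\rm arank}((I:x_1))\ge n-{\rm arank}(I)$ and likewise for depth, using ${\rm arank}((I:x_1))\le {\rm arank}(I)$ from Lemma~\ref{wcolon}. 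For the other end, write $(I,x_1)=(I',x_1)$ where $I'=I\cap S'$ is a weakly polymatroidal ideal of $S'$; then $S/(I,x_1)\cong S'/I'$ as $S'$-modules, so by the inductive hypothesis in $n-1$ variables ${\rm sdepth}_{S'}(S'/I')\ge (n-1)-{\rm arank}(I')$, and Lemma~\ref{del} gives ${\rm arank}(I')\le {\rm arank}(I)-1$, whence $(n-1)-{\rm arank}(I')\ge n-{\rm arank}(I)$. Combining the two bounds through the exact sequence yields ${\rm sdepth}(S/I)\ge n-{\rm arank}(I)$ and ${\rm depth}(S/I)\ge n-{\rm arank}(I)$.

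Finally, for ${\rm sdepth}(I)$ I run the parallel argument on the exact sequence $0\to (I:x_1)\to I\to I/(I:x_1)\,x_1 \to 0$, or more conveniently I use the decomposition $I = x_1\,(I:x_1)\ \oplus\ (I\cap S')$ as $\mathbb{K}$-vector spaces when $x_1$ divides every generator, and the general Stanley-decomposition bookkeeping otherwise; the relevant inequality is ${\rm sdepth}(I)\ge \min\{{\rm sdepth}((I:x_1)),\ {\rm sdepth}(I')+?\}$ refined so that the contribution from the $x_1$-divisible part keeps the extra $+1$. Concretely, $x_1\cdot(I:x_1)$ contributes Stanley spaces of dimension $\ge {\rm sdepth}((I:x_1))\ge n-{\rm arank}(I)+1$ by the inductive hypothesis and Lemma~\ref{wcolon}, while $I'\subseteq S'$ contributes, after adjoining back nothing (it already lives in the smaller ring) or using ${\rm sdepth}_S(I'S)={\rm sdepth}_{S'}(I')+1$, spaces of dimension $\ge (n-1)-{\rm arank}(I')+1+ \text{(correction)} \ge n-{\rm arank}(I)+1$ via Lemma~\ref{del}. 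Taking the minimum over all these Stanley spaces gives ${\rm sdepth}(I)\ge n-{\rm arank}(I)+1$.

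The main obstacle I anticipate is getting the bookkeeping for ${\rm sdepth}(I)$ exactly right — in particular making sure the $+1$ is genuinely preserved in \emph{both} summands of the Stanley decomposition, since the naive short exact sequence inequality for ideals (as opposed to cyclic quotients) can lose the $+1$ on the wrong piece, and one must instead argue at the level of explicit Stanley decompositions (splitting $G(I)$ according to divisibility by $x_1$ and reassembling). A secondary technical point is setting up the induction parameter cleanly so that the move to $(I:x_1)$, which lives in the \emph{same} polynomial ring, is legitimate; this is handled by inducting on $\sum_{u\in G(I)}\deg u$ or on the number of variables in $\bigcup_u{\rm Supp}(u)$ rather than on $n$ alone, with Lemma~\ref{wcolon} ensuring the ${\rm arank}$ bound survives.
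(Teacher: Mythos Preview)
Your proposal is correct and follows essentially the same route as the paper: the paper also argues by simultaneous induction on $n$ and on $\sum_{u\in G(I)}\deg u$, uses the $\mathbb{K}$-vector space splittings $I=(I\cap S')\oplus x_1(I:x_1)$ and $S/I=(S'/I')\oplus x_1(S/I'')$ together with the depth lemma on the short exact sequence, and invokes Lemmas~\ref{colon}, \ref{wcolon}, \ref{del} exactly as you outline. Your two anticipated obstacles dissolve just as you suspect in the final paragraph: no ``correction'' term is needed for the $I'$-piece because Lemma~\ref{del} already drops ${\rm arank}$ by one, and the passage to $(I:x_1)$ is legitimized by the secondary induction on total degree (Lemma~\ref{colon} guarantees this sum strictly decreases).
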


\begin{proof}
We prove (i) and (ii) simultaneously by induction on $n$ and $$\sum_{u\in G(I)} {\rm deg} (u),$$ where $G(I)$ is the set of minimal monomial generators of $I$. If
$n=1$ or $$\sum_{u\in G(I)} {\rm deg} (u)=1,$$ then $I$ is a principal ideal and so we have ${\rm arank}(I)=1$, ${\rm
sdepth}(I)=n$, ${\rm depth}(S/I)=n-1$ and by \cite[Theorem
1.1]{r}, ${\rm sdepth}(S/I)=n-1$. Therefore, in
these cases, the inequalities in (i) and (ii) are trivial.

We now assume that $n\geq 2$ and
$$\sum_{u\in G(I)} {\rm deg} (u)\geq 2.$$Let $S'=\mathbb{K}[x_2, \ldots, x_n]$ be the polynomial ring obtained from $S$ by deleting the variable $x_1$ and consider the ideals $I'=I\cap S'$ and
$I''=(I:x_1)$. If $$x_1\notin \bigcup_{u\in G(I)}{\rm Supp}(u_i),$$then the induction hypothesis on $n$ implies that
$${\rm depth}(S/I)={\rm depth}(S'/I')+1\geq (n-1)-{\rm arank}(I')+1=n-{\rm arank}(I).$$On the other hand, by \cite[Theorem
1.1]{r} and \cite[Lemma 3.6]{hvz}, we conclude that ${\rm sdepth}(S/I)={\rm
sdepth}(S'/I')+1$ and ${\rm sdepth}(I)={\rm sdepth} (I')+1$.
Therefore, using the induction hypothesis on $n$ we conclude that ${\rm
sdepth}(I)\geq n-{\rm arank}(I)+1$ and ${\rm sdepth}(S/I)\geq n-{\rm arank}(I)$.
Therefore, we may assume that $$x_1\in \bigcup_{u\in G(I)}{\rm Supp}(u_i),$$ Now $I=I'S'\oplus x_1I''S$ and $S/I=(S'/I'S')\oplus
x_1(S/I''S)$ and therefore by definition of  the Stanley depth we have
\[
\begin{array}{rl}
{\rm sdepth}(I)\geq \min \{{\rm sdepth}_{S'}(I'S'), {\rm sdepth}_S(I'')\},
\end{array} \tag{1} \label{1}
\]
and
\[
\begin{array}{rl}
{\rm sdepth}(S/I)\geq \min \{{\rm sdepth}_{S'}(S'/I'S'), {\rm sdepth}_S(S/I'')\}.
\end{array} \tag{2} \label{2}
\]
On the other hand, by applying the depth lemma on the exact sequence
\[
\begin{array}{rl}
0\longrightarrow S/(I:x_1)\longrightarrow S/I\longrightarrow S/(I, x_1)
\longrightarrow 0
\end{array}
\]
we conclude that
\[
\begin{array}{rl}
{\rm depth}(S/I)\geq \min \{{\rm depth}_{S'}(S'/I'S'), {\rm depth}_S(S/I'')\}.
\end{array} \tag{3} \label{3}
\]
Using Lemma \ref{colon} it follows that $I''$ is a weakly polymatroidal ideal and by Lemma \ref{wcolon} we conclude that that ${\rm arank}(I'')\leq {\rm arank}(I)$. Hence our induction hypothesis on $$\sum_{u\in G(I)} {\rm deg}(u)$$
 implies that $${\rm depth}_S(S/I'')\geq n-{\rm arank}(I'')\geq n-{\rm arank}(I),$$ $${\rm sdepth}_S(S/I'')\geq n-{\rm arank}(I'')\geq n-{\rm arank}(I),$$ and $${\rm sdepth}_S(I'')\geq n-{\rm arank}(I'')+1\geq n-{\rm arank}(I)+1.$$

On the other hand $I'S'$ is a weakly polymatroidal ideal and since $$x_1 \in \bigcup_{i=1}^s {\rm Supp}(u_i),$$using Lemma \ref{del} we conclude that ${\rm arank}(I'S')\leq {\rm arank}(I)-1$ and therefore by
our induction hypothesis on $n$ we conclude that

$${\rm sdepth}_{S'}(I'S')\geq (n-1)-{\rm arank}(I'S')+1\geq (n-1)-({\rm arank}(I)-1)+1$$ $$=n-{\rm arank}(I)+1,$$

and similarly ${\rm sdepth}_{S'}(S'/I'S')\geq n-{\rm arank}(I)$ and ${\rm
depth}_{S'}(S'/I'S')\geq n-{\rm arank}(I)$. Now the assertions follow by inequalities (\ref{1}), (\ref{2})
and (\ref{3}).
\end{proof}

As an immediate consequence of Proposition \ref{single} and Theorem \ref{main}, we conclude the following result which appeared in \cite{psy}.

\begin{cor}
Let $I$ be a weakly polymatroidal ideal of $S=\mathbb{K}[x_1,\ldots,x_n]$
which is generated in a single degree. Then we have the following
assertions:

\begin{itemize}
\item[(i)] ${\rm sdepth}(I)\geq n-\ell(I)+1$ and ${\rm sdepth}(S/I)\geq
    n-\ell(I)$.\\[-0.3cm]
\item[(ii)] ${\rm depth}(S/I)\geq n-\ell(I)$.
\end{itemize}
\end{cor}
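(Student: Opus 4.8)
The plan is to obtain the Corollary as an immediate consequence of the two results already at hand, namely Proposition~\ref{single} and Theorem~\ref{main}; there is essentially nothing to prove beyond combining them. The point is that the hypotheses dovetail perfectly. Since $I$ is weakly polymatroidal, Theorem~\ref{main} applies verbatim and yields
\[
{\rm sdepth}(I)\geq n-{\rm arank}(I)+1,\qquad {\rm sdepth}(S/I)\geq n-{\rm arank}(I),\qquad {\rm depth}(S/I)\geq n-{\rm arank}(I),
\]
while, since $I$ is generated in a single degree, Proposition~\ref{single} applies and gives the equalities $\ell(I)={\rm rank}(I)={\rm arank}(I)$.

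The remaining step is purely formal: substitute ${\rm arank}(I)=\ell(I)$ into the three displayed inequalities. This turns them into ${\rm sdepth}(I)\geq n-\ell(I)+1$, ${\rm sdepth}(S/I)\geq n-\ell(I)$ and ${\rm depth}(S/I)\geq n-\ell(I)$, which are precisely assertions (i) and (ii). So I would simply record these two invocations together with the substitution, and note (as the text preceding the statement already does) that this recovers the result of \cite{psy}.

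I do not expect any genuine obstacle here; all the real work has already been carried out in Section~\ref{sec2}, in the inductive proof of Theorem~\ref{main} (on $n$ and on $\sum_{u\in G(I)}{\rm deg}(u)$, via Lemmas~\ref{colon}, \ref{wcolon} and \ref{del}) and in the short linear-algebra argument of Proposition~\ref{single} (which exploits that all minimal generators have the same coordinate sum). The one thing worth a moment's care is simply checking that both hypotheses are simultaneously available --- ``weakly polymatroidal'' to feed Theorem~\ref{main} and ``generated in a single degree'' to feed Proposition~\ref{single} --- but both are assumed outright in the statement, so no delicacy arises. If one preferred a self-contained alternative one could instead re-run the induction of Theorem~\ref{main} while tracking $\ell(I)$ directly, using \cite[Lemma 10.3.19]{hh'} in place of Proposition~\ref{single}, but this would only reprove what Proposition~\ref{single} already encapsulates.
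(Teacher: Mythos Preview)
Your proposal is correct and matches the paper's approach exactly: the paper presents this corollary as an immediate consequence of Proposition~\ref{single} and Theorem~\ref{main}, with no further argument given. Your write-up simply spells out the one-line substitution ${\rm arank}(I)=\ell(I)$ that the paper leaves implicit.
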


Using Theorem \ref{main} we provide an upper bound for the height of associated primes of a weakly polymatroidal ideal.

\begin{cor}
Let $I$ be a weakly polymatroidal ideal of $S=\mathbb{K}[x_1, \ldots,x_n]$. Then
$$\max\{{\rm ht}(\frak{p})\mid \frak{p}\in {\rm Ass}(S/I)\}\leq {\rm arank}(I).$$
\end{cor}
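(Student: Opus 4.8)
The plan is to combine part (ii) of Theorem \ref{main} with a standard fact from commutative algebra relating depth to the dimensions of associated primes. Recall that for any finitely generated module $M$ over a Noetherian ring, and any $\mathfrak{p}\in{\rm Ass}(M)$, one has ${\rm depth}(M)\leq\dim(M_\mathfrak{p})\leq\dim(S/\mathfrak{p})$; applied to $M=S/I$ this yields ${\rm depth}(S/I)\leq\dim(S/\mathfrak{p})$ for every $\mathfrak{p}\in{\rm Ass}(S/I)$.

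Next I would use that $S=\mathbb{K}[x_1,\ldots,x_n]$ is Cohen--Macaulay (indeed a polynomial ring), so that for every prime $\mathfrak{p}$ we have the identity $\dim(S/\mathfrak{p})=n-{\rm ht}(\mathfrak{p})$. Substituting this into the inequality above gives ${\rm depth}(S/I)\leq n-{\rm ht}(\mathfrak{p})$ for every $\mathfrak{p}\in{\rm Ass}(S/I)$, equivalently ${\rm ht}(\mathfrak{p})\leq n-{\rm depth}(S/I)$.

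Finally I would invoke Theorem \ref{main}(ii), which asserts ${\rm depth}(S/I)\geq n-{\rm arank}(I)$ for the weakly polymatroidal ideal $I$. Chaining the two bounds, for any $\mathfrak{p}\in{\rm Ass}(S/I)$ we obtain
\[
{\rm ht}(\mathfrak{p})\leq n-{\rm depth}(S/I)\leq n-(n-{\rm arank}(I))={\rm arank}(I),
\]
and taking the maximum over all associated primes gives the claimed inequality.

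There is essentially no serious obstacle here: the argument is a two-line deduction once Theorem \ref{main}(ii) is in hand, and the only ingredient beyond the theorem is the well-known depth/associated-prime inequality together with the Cohen--Macaulayness of the polynomial ring. The one point worth stating carefully is the direction of the depth inequality (depth is bounded above, not below, by $\dim S/\mathfrak{p}$), since it is precisely this that converts the lower bound on depth from Theorem \ref{main} into the desired upper bound on heights.
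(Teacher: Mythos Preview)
Your approach is exactly the paper's: both combine Theorem~\ref{main}(ii) with the standard bound ${\rm depth}(S/I)\leq n-{\rm ht}(\mathfrak{p})$ for $\mathfrak{p}\in{\rm Ass}(S/I)$ (the paper simply cites \cite[Proposition~1.2.13]{bh} for this, and then the Cohen--Macaulay step is implicit).

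One small correction: your intermediate chain ${\rm depth}(M)\leq\dim(M_{\mathfrak{p}})\leq\dim(S/\mathfrak{p})$ is not right as written. For $M=S/I$ and $\mathfrak{p}$ a \emph{minimal} prime of $I$, the localization $(S/I)_{\mathfrak{p}}$ is zero-dimensional, while ${\rm depth}(S/I)$ can be positive, so the first inequality fails. Just invoke ${\rm depth}(M)\leq\dim(S/\mathfrak{p})$ for $\mathfrak{p}\in{\rm Ass}(M)$ directly (this is precisely Bruns--Herzog, Proposition~1.2.13) and drop the spurious middle term; the rest of your argument is fine.
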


\begin{proof}
Let $\frak{p}\in {\rm Ass}(S/I)$ be given. By \cite[Proposition 1.2.13]{bh}
we have ${\rm depth}(S/I)\leq n-{\rm ht}(\frak{p})$, while by Theorem
\ref{main} we have ${\rm depth}(S/I)\geq n-{\rm arank}(I)$. This implies that
${\rm ht}(\frak{p})\leq {\rm arank}(I)$ for every $\frak{p}\in {\rm Ass}(S/I)$ and
completes the proof of the corollary.
\end{proof}


\section{Stanley depth of squarefree monomial ideals} \label{sec3}

Let $I\subset S$ be an arbitrary ideal. An element $f \in S$ is
{\it integral} over $I$, if there exists an equation
$$f^k + c_1f^{k-1}+ \ldots + c_{k-1}f + c_k = 0 {\rm \ \ \ \ with} \ c_i\in I^i.$$
The set of elements $\overline{I}$ in $S$ which are integral over $I$ is the {\it integral closure}
of $I$. It is known that the integral closure of a monomial ideal $I\subset S$ is a monomial ideal
generated by all monomials $u \in S$ for which there exists an integer $k$ such that
$u^k\in I^k$ (see \cite[Theorem 1.4.2]{hh'}).

In \cite{s1}, the author proposed the following conjecture regarding the Stanley depth of integrally closed monomial ideals.

\begin{conj} \label{conje}
Let $I\subset S$ be an integrally closed monomial ideal. Then ${\rm sdepth}(S/I)\geq n-\ell(I)$ and ${\rm sdepth} (I)\geq n-\ell(I)+1$.
\end{conj}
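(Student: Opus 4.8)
Every squarefree monomial ideal is radical, hence integrally closed, so Theorem \ref{smain} already applies to it; combined with Proposition \ref{single} this gives Conjecture \ref{conje} for squarefree ideals generated in a single degree, where $\mathrm{rank}(I)=\ell(I)$. Thus the substance of the conjecture lies with integrally closed ideals that are \emph{not} squarefree --- for instance the powers $\mathfrak{m}^{k}$ and their monomial relatives --- to which Theorem \ref{smain} does not apply directly. My plan is to reduce such an ideal to a squarefree one by polarization and then invoke Theorem \ref{smain}.

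Write $I^{\mathrm{pol}}$ for the polarization of $I$: a squarefree monomial ideal in a polynomial ring $T=\mathbb{K}[\dots]$ on $N\ge n$ variables, obtained by replacing each power $x_{i}^{a}$ appearing in a minimal generator by a product of $a$ new variables. Two standard features drive the argument. First, $S/I$ is recovered from $T/I^{\mathrm{pol}}$ by killing a regular sequence of $N-n$ differences of the polarizing variables, so their invariants are linked by the shift $N-n$. Second, $I^{\mathrm{pol}}$ is squarefree, so Theorem \ref{smain} gives $\mathrm{sdepth}(T/I^{\mathrm{pol}})\ge N-\mathrm{rank}(I^{\mathrm{pol}})$ and $\mathrm{sdepth}(I^{\mathrm{pol}})\ge N-\mathrm{rank}(I^{\mathrm{pol}})+1$. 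I would then descend these bounds by a Stanley-depth comparison under polarization, $\mathrm{sdepth}(S/I)\ge \mathrm{sdepth}(T/I^{\mathrm{pol}})-(N-n)$ and the analogue for the ideal, obtained by specializing a Stanley decomposition of $T/I^{\mathrm{pol}}$ along the polarizing variables. Putting these together gives $\mathrm{sdepth}(S/I)\ge n-\big(\mathrm{rank}(I^{\mathrm{pol}})-(N-n)\big)$, and likewise $\mathrm{sdepth}(I)\ge n-\big(\mathrm{rank}(I^{\mathrm{pol}})-(N-n)\big)+1$.

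It remains to match the quantity on the right with $\ell(I)$, that is, to prove $\mathrm{rank}(I^{\mathrm{pol}})-(N-n)\le \ell(I)$. This is the step that must genuinely use integral closedness, and it is where I expect the Newton-polyhedron picture to do the work: $\overline{I}$ is exactly the set of lattice points of $\mathrm{conv}\{\mathbf{a}_{1},\dots,\mathbf{a}_{m}\}+\mathbb{R}^{n}_{\ge 0}$, while $\ell(I)$ is the Krull dimension of the special fibre ring, a quotient of the toric algebra on the configuration $\{(\mathbf{a}_{i},1)\}$ by the relations coming from non-minimal products. Polarization inflates this exponent configuration in a controlled, purely combinatorial fashion, and tracking how it changes the dimension --- using that for an integrally closed ideal the generators are precisely the lattice points on the lower boundary of the polyhedron --- should yield the required inequality. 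I expect this bookkeeping to be the routine, if fiddly, part of the argument.

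The main obstacle is the polarization comparison $\mathrm{sdepth}(S/I)\ge \mathrm{sdepth}(T/I^{\mathrm{pol}})-(N-n)$ in precisely this direction. Whether Stanley depth is preserved, or even monotone, under polarization is a delicate and in general open question: specializing a Stanley decomposition of $T/I^{\mathrm{pol}}$ down to $S$ need not keep the summands free over the relevant polynomial subrings, and the opposite inequality is the one more readily at hand. Unless this comparison is secured in the needed direction, the scheme proves Conjecture \ref{conje} only conditionally, which is consistent with the present paper establishing the unconditional statement only for squarefree ideals generated in a single degree. A second, decomposition-based route would imitate the inductions of Sections \ref{sec2} and \ref{sec3} directly on $I$, relying on the facts that $(I:x_{1})$ and $I\cap S'$ stay integrally closed and that $\ell$ does not increase under a variable colon and drops by one under elimination; but this meets the same core difficulty, since it is exactly the control of $\ell$ --- rather than of its combinatorial surrogates $\mathrm{rank}$ and $\mathrm{arank}$ --- through these operations that is not yet available.
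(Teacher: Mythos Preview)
The statement you are asked to prove is labeled a \emph{Conjecture} in the paper, and the paper does not supply a proof of it in full generality; it establishes only the special case of squarefree monomial ideals generated in a single degree (Corollary~\ref{sconj}), exactly as you observe in your opening paragraph. So there is no ``paper's own proof'' to compare your attempt against, and your first paragraph already reproduces everything the paper actually proves about Conjecture~\ref{conje}.

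As for the remainder of your proposal, it is an honest sketch rather than a proof, and you have correctly located the decisive gap yourself: the inequality $\mathrm{sdepth}(S/I)\ge \mathrm{sdepth}(T/I^{\mathrm{pol}})-(N-n)$ is the direction that is \emph{not} readily available. Lifting a Stanley decomposition from $S/I$ to $T/I^{\mathrm{pol}}$ is straightforward and gives the reverse inequality, but specializing one downward can destroy freeness of the summands, so the step your argument needs is genuinely open at the level of this paper. There is a second soft spot you pass over too quickly: the claim that $\mathrm{rank}(I^{\mathrm{pol}})-(N-n)\le \ell(I)$ is not ``routine bookkeeping.'' Polarization scrambles the exponent configuration in a way that does not interact cleanly with linear rank, and for non-equigenerated $I$ the analytic spread is not simply the rank of the exponent matrix (Proposition~\ref{single} fails), so the Newton-polyhedron picture you invoke would have to do real work here. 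In short, your plan identifies a plausible line of attack but leaves both of its two essential inequalities unproved; this is consistent with the statement remaining a conjecture in the paper.
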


In this section we prove that Conjecture \ref{conje} is true for every squarefree monomial ideal which is generated in a single degree. Indeed we show that for every squarefree monomial ideal $I$ of the polynomial ring $S$, the inequalities ${\rm sdepth}(I)\geq n-{\rm rank}(I)+1$ and ${\rm sdepth}(S/I)\geq n-{\rm rank}(I)$ hold (see Theorem \ref{smain}).

First we need the following lemma.

\begin{lem} \label{swcolon}
Let $I$ be a squarefree monomial ideal. Then for every $1\leq j \leq  n$ we have ${\rm rank}((I:x_j))\leq {\rm rank}(I)$.
\end{lem}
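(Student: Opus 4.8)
The plan is to imitate the two-step scheme of Lemmas~\ref{colon} and \ref{wcolon}: first describe a generating set of $(I:x_j)$ in terms of $G(I)$, and then compare the $\mathbb{Q}$-spans of the associated exponent vectors by means of a coordinate projection, the essential point being that squarefreeness keeps all the relevant exponent vectors inside the hyperplane $\{x_j=0\}$.

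First I would dispose of the trivial case: if no minimal generator of $I$ is divisible by $x_j$, then $(I:x_j)=I$ and there is nothing to prove. Otherwise write $G(I)=\{u_1,\dots,u_m\}$ and, after relabeling, assume $x_j\mid u_i$ for $1\le i\le t$ and $x_j\nmid u_i$ for $t+1\le i\le m$, with $1\le t\le m$; put $v_i=u_i/x_j$ for $1\le i\le t$. The first step is to show that $(I:x_j)$ is generated by $\{v_1,\dots,v_t,u_{t+1},\dots,u_m\}$. For a monomial $w$ we have $w\in(I:x_j)$ iff $x_jw\in I$ iff $u_i\mid x_jw$ for some $i$; for $i\le t$, cancelling the single factor $x_j$ shows this is equivalent to $v_i\mid w$, and for $i>t$, since $x_j$ does not divide $u_i$, it is equivalent to $u_i\mid w$. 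Since the minimal monomial generating set of a monomial ideal is contained in every monomial generating set, this yields the inclusion $G((I:x_j))\subseteq\{v_1,\dots,v_t,u_{t+1},\dots,u_m\}$.

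For the second step, let $\mathbf{a}_i$ be the exponent vector of $u_i$ and set $V={\rm span}_{\mathbb{Q}}\{\mathbf{a}_1,\dots,\mathbf{a}_m\}$, so that ${\rm rank}(I)=\dim_{\mathbb{Q}}V$ (a largest linearly independent subset of a finite set of vectors is a basis of its span). Because $I$ is squarefree, for $i\le t$ the exponent vector of $v_i$ is $\mathbf{a}_i-\mathbf{e}_j$, whose $j$-th coordinate is $0$, while for $i>t$ the $j$-th coordinate of $\mathbf{a}_i$ is already $0$. Hence, if $\pi\colon\mathbb{Q}^n\to\mathbb{Q}^n$ denotes the linear projection that annihilates the $j$-th coordinate, the exponent vectors of all the generators from the first step are precisely $\pi(\mathbf{a}_1),\dots,\pi(\mathbf{a}_m)$. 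Combining this with the inclusion above,
\[
{\rm rank}((I:x_j))\ \le\ \dim_{\mathbb{Q}}{\rm span}_{\mathbb{Q}}\{\pi(\mathbf{a}_1),\dots,\pi(\mathbf{a}_m)\}\ =\ \dim_{\mathbb{Q}}\pi(V)\ \le\ \dim_{\mathbb{Q}}V\ =\ {\rm rank}(I).
\]

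The step that I expect to deserve the most care is precisely the role of squarefreeness in the second step: it is exactly what keeps the exponent vectors of the generators of $(I:x_j)$ inside $\{x_j=0\}$, so that they are literally the images under the single linear map $\pi$ of the exponent vectors of $G(I)$. For a general monomial ideal, $\mathbf{a}_i-\mathbf{e}_j$ may acquire a positive $j$-th coordinate, $\pi$ no longer computes the colon, and the inequality can genuinely fail; so the squarefree hypothesis is essential and not cosmetic. If one prefers the dependence-chasing style of Lemma~\ref{wcolon}, the same map $\pi$ does the job: applying $\pi$ to any linear dependence among a subfamily of the $\mathbf{a}_i$ produces a linear dependence among the exponent vectors of the corresponding generators of $(I:x_j)$, whence a largest linearly independent family of generators of $(I:x_j)$ lifts to an equally large linearly independent family of generators of $I$.
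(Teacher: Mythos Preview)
Your argument is correct and follows essentially the same idea as the paper: both describe the generators of $(I:x_j)$ as the monomials $v_i$ obtained by stripping the (at most one, by squarefreeness) factor of $x_j$ from each $u_i\in G(I)$, observe that their exponent vectors are exactly the images of the $\mathbf{a}_i$ under the coordinate projection killing the $j$th entry, and conclude the rank inequality from $\dim\pi(V)\le\dim V$. Your write-up is simply more explicit than the paper's terse version---you spell out why $(I:x_j)=(v_1,\dots,v_t,u_{t+1},\dots,u_m)$ and phrase the linear-algebra step via $\pi$, whereas the paper leaves both points implicit.
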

\begin{proof}
Let $G(I)=\{u_1, \ldots, u_m\}$ be the set of minimal monomial generators of $I$. Without loss of generality, we may
assume that $u_1, \ldots, u_t$ are divisible by $x_j$ and $u_{t+1},\ldots,
u_m$ are not divisible by $x_j$, where $0\leq t\leq m$. Put $v_i=u_i/x_j$, if $1\leq i \leq t$ and $v_i=u_i$, if $t+1\leq i \leq m$.  For simplicity we assume that $\mathbf{a_i}$ is the exponent vector of $u_i$ and $\mathbf{b_i}$ is the exponent vector of $v_i$  ($1\leq i \leq m$). To prove the assertion one just note that for every $k\neq j$ and every $1\leq i \leq m$, the $k$th component of $\mathbf{a_i}$  and $\mathbf{b_i}$ are the same and for $k=j$, the $k$th component of $\mathbf{b_i}$ is always zero.
\end{proof}

We are now ready to state and prove the main result of this section.

\begin{thm} \label{smain}
Let $I$ be a squarefree monomial ideal of $S=\mathbb{K}[x_1,\ldots,x_n]$. Then ${\rm sdepth}(I)\geq n-{\rm rank}(I)+1$ and ${\rm sdepth}(S/I)\geq n-{\rm rank}(I)$.
\end{thm}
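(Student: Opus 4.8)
The plan is to mimic the inductive structure used in the proof of Theorem~\ref{main}, carrying out a double induction on $n$ and on $\sum_{u\in G(I)}{\rm deg}(u)$, but now tracking ${\rm rank}$ instead of ${\rm arank}$ and exploiting that squarefreeness is preserved under both of the relevant operations. First I would handle the base cases: if $n=1$ or $\sum_{u\in G(I)}{\rm deg}(u)=1$, then $I$ is principal (in fact $I=(x_i)$ or $I=S$ by squarefreeness), so ${\rm rank}(I)\le 1$ and the claimed bounds reduce to ${\rm sdepth}(I)\ge n$ and ${\rm sdepth}(S/I)\ge n-1$, which hold by \cite[Theorem 1.1]{r}.

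For the inductive step, fix a variable appearing in some generator, say $x_1$, and set $S'=\mathbb{K}[x_2,\dots,x_n]$, $I'=I\cap S'$, $I''=(I:x_1)$. If $x_1\notin\bigcup_{u\in G(I)}{\rm Supp}(u)$, then as in Theorem~\ref{main} we use \cite[Theorem 1.1]{r} and \cite[Lemma 3.6]{hvz} to reduce the Stanley depth computation over $S$ to one over $S'$ with the same ${\rm rank}$, and apply the induction hypothesis on $n$. Otherwise, $x_1$ divides some generator, and we use the decompositions $I=I'S'\oplus x_1I''S$ and $S/I=(S'/I'S')\oplus x_1(S/I''S)$ to get inequalities~(\ref{1}) and~(\ref{2}) bounding ${\rm sdepth}$ below by the minimum of the two pieces. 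The colon ideal $I''=(I:x_1)$ is again a squarefree monomial ideal with a strictly smaller generator-degree sum, and by Lemma~\ref{swcolon} we have ${\rm rank}(I'')\le{\rm rank}(I)$, so the induction hypothesis gives ${\rm sdepth}_S(I'')\ge n-{\rm rank}(I'')+1\ge n-{\rm rank}(I)+1$ and similarly for $S/I''$. For the restricted ideal $I'S'$, which is a squarefree monomial ideal in $n-1$ variables, I would need the analogue of Lemma~\ref{del}, namely ${\rm rank}(I')+1\le{\rm rank}(I)$: this follows because one can adjoin the exponent vector of one generator divisible by $x_1$ to a maximal linearly independent set among generators not divisible by $x_1$, and this vector is linearly independent of the others by inspecting the first coordinate (exactly the argument of Lemma~\ref{del}, which actually works for linear independence as well, the affine condition there being superfluous for the coordinate-inspection part). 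Granting this, the induction hypothesis on $n$ yields ${\rm sdepth}_{S'}(I'S')\ge (n-1)-{\rm rank}(I'S')+1\ge n-{\rm rank}(I)+1$ and ${\rm sdepth}_{S'}(S'/I'S')\ge n-{\rm rank}(I)$, and combining with~(\ref{1}) and~(\ref{2}) completes the proof.

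The main obstacle I anticipate is precisely establishing the rank-drop inequality ${\rm rank}(I\cap S')+1\le{\rm rank}(I)$ for squarefree ideals when $x_1$ divides a generator. In the affine-rank setting Lemma~\ref{del} works by looking at the first component of a dependence relation and concluding the coefficient $d_0$ of the chosen generator $u_1$ (divisible by $x_1$) must vanish; the same first-coordinate argument shows $d_0=0$ in the linear case too, so no affine hypothesis is actually needed there, and the statement transfers. The only point requiring a little care is that the generators of $I\cap S'$ really are $\{u_{t+1},\dots,u_m\}$ (the generators of $I$ not involving $x_1$) — this is standard for squarefree ideals but should be noted. Once the rank analogue of Lemma~\ref{del} is in hand, the rest of the argument is a routine transcription of the proof of Theorem~\ref{main} with ${\rm arank}$ replaced by ${\rm rank}$ throughout, and the depth statement is simply omitted since Theorem~\ref{smain} only claims Stanley-depth bounds.
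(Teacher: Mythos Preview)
Your proposal is correct and follows essentially the same decomposition and lemmas as the paper. The one difference is in the inductive setup: you carry a double induction on $n$ and $\sum_{u\in G(I)}\deg(u)$ as in Theorem~\ref{main}, whereas the paper exploits squarefreeness to use induction on $n$ alone. The point is that for a squarefree ideal the generators of $I''=(I:x_1)$ already lie in $S'=\mathbb{K}[x_2,\dots,x_n]$, so one can apply the induction hypothesis on $n$ to $I''$ as an ideal of $S'$ and then lift via \cite[Theorem~1.1]{r} and \cite[Lemma~3.6]{hvz}; this makes the degree-sum parameter unnecessary. Your version works fine, it is just slightly less economical.
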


\begin{proof}
Let $G(I)$ be the set of minimal monomial generators of $I$. We prove the assertions by induction on $n$. If
$n=1$ then $I$ is a principal ideal and so we have ${\rm rank}(I)=1$, ${\rm
sdepth}(I)=n$ and by \cite[Theorem
1.1]{r}, ${\rm sdepth}(S/I)=n-1$. Therefore, in
this case, there is nothing to prove.

We now assume that $n\geq 2$. Let $S'=\mathbb{K}[x_2, \ldots, x_n]$ be the polynomial ring obtained from $S$ by deleting the variable $x_1$ and consider the ideals $I'=I\cap S'$ and
$I''=(I:x_1)$. If $$x_1\notin \bigcup_{u\in G(I)}{\rm Supp}(u_i),$$then by \cite[Theorem
1.1]{r} and \cite[Lemma 3.6]{hvz}, we conclude that ${\rm sdepth}(S/I)={\rm
sdepth}(S'/I')+1$ and ${\rm sdepth}(I)={\rm sdepth} (I')+1$.
Therefore, using our induction hypothesis, we conclude that ${\rm
sdepth}(I)\geq n-{\rm rank}(I)+1$ and ${\rm sdepth}(S/I)\geq n-{\rm rank}(I)$.
Hence we may assume that $$x_1\in \bigcup_{u\in G(I)}{\rm Supp}(u_i),$$ Now $I=I'S'\oplus x_1I''S$ and $S/I=(S'/I'S')\oplus
x_1(S/I''S)$ and therefore by the definition of Stanley depth we have
\[
\begin{array}{rl}
{\rm sdepth}(I)\geq \min \{{\rm sdepth}_{S'}(I'S'), {\rm sdepth}_S(I'')\},
\end{array} \tag{1} \label{1}
\]
and
\[
\begin{array}{rl}
{\rm sdepth}(S/I)\geq \min \{{\rm sdepth}_{S'}(S'/I'S'), {\rm sdepth}_S(S/I'')\}.
\end{array} \tag{2} \label{2}
\]
Note that the generators of $I''$ belong to $S'$. Therefore our induction hypothesis implies that $${\rm sdepth}_{S'}(S'/I'')\geq (n-1)-{\rm rank}(I'')$$ and  $${\rm sdepth}_{S'}(S'/I'')\geq (n-1)-{\rm rank}(I'')+1$$ Using Lemma \ref{swcolon} together with \cite[Theorem
1.1]{r} and \cite[Lemma 3.6]{hvz}, we conclude that
$${\rm sdepth}(S/I'')={\rm sdepth}_{S'}(S'/I'')+1\geq (n-1)-{\rm rank}(I'')+1\geq n-{\rm rank}(I),$$
and
$${\rm sdepth}_S(I'')={\rm sdepth}_{S'}(I'')+1\geq (n-1)-{\rm rank}(I'')+1+1\geq n-{\rm rank}(I)+1.$$

On the other hand, since $$x_1 \in \bigcup_{i=1}^s {\rm Supp}(u_i),$$ it follows that ${\rm rank}(I'S')\leq {\rm rank}(I)-1$ and therefore by
our induction hypothesis we conclude that

$${\rm sdepth}_{S'}(I'S')\geq (n-1)-{\rm rank}(I'S')+1\geq (n-1)-({\rm rank}(I)-1)+1$$ $$=n-{\rm rank}(I)+1,$$
and similarly ${\rm sdepth}_{S'}(S'/I'S')\geq n-{\rm rank}(I)$. Now the assertions follow by inequalities (\ref{1}) and (\ref{2}).
\end{proof}
As an immediate consequence of Proposition \ref{single} and Theorem \ref{smain} we conclude that Conjecture \ref{conje} is true for every squarefree monomial ideal which is generated in a single degree.

\begin{cor} \label{sconj}
Let $I$ be a squarefree monomial ideal of $S=\mathbb{K}[x_1,\ldots,x_n]$ which is generated in a single degree. Then ${\rm sdepth}(I)\geq n-\ell(I)+1$ and ${\rm sdepth}(S/I)\geq n-\ell(I)$.
\end{cor}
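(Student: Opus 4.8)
The statement is an immediate corollary of the two pillars already established in the excerpt, so the plan is simply to combine them in the right order. First I would invoke Proposition \ref{single}: the hypothesis that $I$ is generated in a single degree is precisely what that proposition requires, and it yields the chain of equalities $\ell(I)={\rm rank}(I)={\rm arank}(I)$. In particular $\ell(I)={\rm rank}(I)$, which is the only consequence of Proposition \ref{single} I actually need here.

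Next I would apply Theorem \ref{smain}, which holds for an \emph{arbitrary} squarefree monomial ideal $I\subset S$ (no equigeneration needed there), to obtain
\[
{\rm sdepth}(I)\geq n-{\rm rank}(I)+1 \quad\text{and}\quad {\rm sdepth}(S/I)\geq n-{\rm rank}(I).
\]
Substituting ${\rm rank}(I)=\ell(I)$ from the previous step into both inequalities gives exactly ${\rm sdepth}(I)\geq n-\ell(I)+1$ and ${\rm sdepth}(S/I)\geq n-\ell(I)$, which is the assertion. That completes the argument; there is essentially no computation to grind through, since all the combinatorial and homological work has been front-loaded into Lemma \ref{swcolon} and Theorem \ref{smain}.

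\textbf{Where the difficulty lies.} There is no genuine obstacle in the corollary itself; the only thing to be careful about is bookkeeping of hypotheses. One must note that every squarefree monomial ideal is radical, hence integrally closed, so that this corollary does indeed land inside the scope of Conjecture \ref{conje}; and one must confirm that the single-degree hypothesis is used only through Proposition \ref{single} and nowhere else (Theorem \ref{smain} does not need it). It is worth flagging that the alternative route through Theorem \ref{main} and ${\rm arank}$ is \emph{not} directly available, since Theorem \ref{main} is stated for weakly polymatroidal ideals rather than for squarefree ideals in general; the bound in Theorem \ref{smain} phrased with ${\rm rank}$ is both the applicable one and, via Proposition \ref{single}, sharp enough to recover the conjectured inequality in this case.
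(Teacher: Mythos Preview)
Your proposal is correct and matches the paper's own argument exactly: the paper states the corollary as an immediate consequence of Proposition~\ref{single} and Theorem~\ref{smain}, which is precisely the combination you describe. Your additional remarks on hypothesis bookkeeping and why the route through Theorem~\ref{main} is not available are accurate and helpful commentary.
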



\section*{Acknowledgment}
The author thanks Volkmar Welker, Siamak Yassemi and Rahim Zaare-Nahandi for reading an earlier version of this article and for their helpful comments.



\end{document}